\newcommand{\re}{\mathrm{reg}}
\newcommand{\wiw}{\mathrm{w}}
\newcommand{\ma}{M\! A}
\newcommand{\sa}{S A}
\newcommand{\au}{\mathrm{Aut\,}}
\newcommand{\idn}{\mathfrak n}
\newcommand{\idm}{\mathfrak m}
\newcommand{\drk}{{\mathbb D}^r_k}
\newcommand{\id}{\mathrm{id}}
\newcommand{\idi}{\mathfrak i}
\newcommand{\idj}{\mathfrak j}
\newcommand{\orw}{\mathrm{ord}}
\newcommand{\mf}{\mathbf M\mathbf f}
\newcommand{\fm}{\mathbf F\mathbf M}
\newcommand{\cwa}{\mathbf W\mathbf A}
\newcommand{\er}{\mathbb R}
\newcommand{\aop}{\mathbb R[x_1,\dots,x_k]}
\begin{document}
\title*{Fixed point subalgebras of Weil algebras: from geometric to algebraic questions}
\author{Miroslav Kure\v{s}}
\institute{Miroslav Kure\v{s} \at Institute of Mathematics, Department of Algebra and Discrete Mathematics, Brno University of Technology,
Technick\'a 2, 61669 Brno, Czech Republic \email{kures@fme.vutbr.cz}
\\
Published results were acquired using the subsidization of the
GA\v{C}R, grant No. 201/09/0981.}

\maketitle

\abstract{The paper is a survey of some results
about Weil algebras applicable in differential geometry, especially in some classification questions on bundles of generalized velocities
and contact elements. Mainly, a number of claims concerning the form of  subalgebras of fixed points of various Weil algebras is demonstrated.
}

\section{Introduction}
\label{sec:1}

Motivated by algebraic geometry, Andr\'e Weil suggested the treatment of infinitesimal objects as homomorphisms from algebras of smooth functions into some real finite-dimensional commutative algebra  with unit in 1950's. In fact, he follows
a certain idea of Sophus Lie: so-called $A$-near points (defined by Weil in \cite{WEI}) represent
the `parametrized infinitesimal submanifolds'. More precisely, let
$M$ be a smooth manifold and let $C^\infty(M,\er)$ be its ring of smooth functions into $\er$:
$A$-near points of~$M$ are defined as
$\er$-algebra homomorphism $C^\infty(M,\er)\to A$,
where $A$ is a certain local $\er$-algebra $A$ (precisely defined below) now called the Weil algebra.
This can be regarded as the first notable occurrence of local $\mathbb R$-algebras
 in differential geometry. 
New concepts, such as Weil algebras, Weil functors, Weil bundles
were introduced and they are widely studied, even to this day, because of their considerable generality.
In a modern categorical approach to differential geometry,
if we interpret geometric objects as bundle functors, then natural
transformations represent a number of geometric constructions. In this context,  finding a bijection between natural transformations of two Weil functors $T^A$, $T^B$
(generalizing well-known functors of higher order velocities and, of course,
the tangent functor as the first of them) and corresponding morphisms of Weil
algebras $A$ and $B$, has   fundamental importance. 
The theory of natural
bundles and operators, including methods for finding natural operators,
is very well presented in the monographical work Natural Operations in Differential Geometry \cite{KMS}
(Ivan Kol\'a\v{r}, Peter Michor and Jan Slov\'ak, 1993).
This paper has survey character: it provides an introduction to Weil algebras
and some selected problems which are geometrically motivated and were studied by the author and his collaborators from the algebraic point of view.

\section
{Starting points: product preserving functors}
\label{sec:2}

Let $F\colon \mf\to\fm$ be a bundle functor from the category
$\mf$ of manifolds (having smooth manifolds as objects and smooth maps as morphisms)
to the category $\fm$ of fibered manifolds (and fibered manifold morphisms). For example, such a functor is the tangent functor $T$.
For two manifolds $M_1$, $M_2$ we denote the standard projection onto
the $i$--th factor by $p_i\colon  M_1\times M_2\to M_i$,
where $i=1,2$.
$F$ is called {\bf product preserving} if the mapping
$$
(F(p_1),F(p_2))\colon F(M_1\times M_2)\to F(M_1)\times F(M_2)
$$
is a diffeomorphism for all manifolds $M_1$, $M_2$.
For a product preserving bundle functor we shall always identify
$F(M_1\times M_2)$ with $F(M_1)\times F(M_2)$ by the diffeomorphism
from the definition. The tangent functor $T$ is product preserving. Another example of a product preserving functor
is the functor $T^r_k$ of $k$-dimensional $r$-th order velocities withal $T^1_1=T$.
Further, we obtain a product preserving functor by arbitrary (finite)
iterations of product preserving functors.

If we denote by $\cwa$ the category of Weil algebras (the exact definition of Weil algebra is postponed to the next section) and Weil algebra
homomorphisms, then the problem of classification of all product preserving functors was solved in works of Kainz and Michor, Luciano and Eck
in the 1980's and reads as follows (see \cite{KMS}).\newline
{\it Product preserving bundle functors from the category
$\mf$ of manifolds into the category $\fm$ of fibered manifolds
are in bijection with objects of $\cwa$ and natural
transformations between two such functors
are in bijection with the morphisms of $\cwa$.}\newline
The correspondence is determined by the following construction
of the bundle functor $T^A$ from a given Weil algebra
$A$. Let $M$ be a smooth manifold and let
$A$ be a Weil algebra. Two smooth maps $g,h\colon\er^k\to M$ are said to
{\it determine the same $A$-velocity} $j^Ag=j^Ah$, if for every smooth
function $\phi\colon M\to\er$
$$
\pi_A(j^r_0(\phi\circ g))=\pi_A(j^r_0(\phi\circ h))
$$
is satisfied. (As usually, we denote here $r$-jets with the source in $0\in\er^k$ by $j^r_0$ and an epimorphism
from the algebra $\drk=J^r_0(\er^k,\er)$ to the algebra $A$ by $\pi_A$.)
The space $T^AM$ of all $A$-velocities on $M$ is fibered over $M$ and is called the {\it Weil bundle}. The functor $T^A$
from $\mf$ into $\fm$  is called
the {\it Weil functor}.

\section
{To the definition of the Weil algebra}
\label{sec:3}

The {\it Weil algebra} is a local commutative $\er$-algebra $A$
with identity, the nilradical (nilpotent ideal) $\idn_A$ of which has finite
dimension as a vector space and $A/\idn_A=\er$.
We call the {\it order} of $A$ the minimum $\orw (A)$ of the integers $r$
satisfying $\idn_A^{r+1}=0$ and the {\it width} $\wiw (A)$ of $A$
the dimension $\dim_\er(\idn_A/\idn_A^2)$.

One can assume $A$ is expressed as a finite dimensional factor
$\er$-algebra of the algebra $\aop$ of real polynomials in several
indeterminates. Thus, the main example is
$$
\drk=\aop/\idm^{r+1},
$$
$\idm=\langle x_1,\dots,x_k\rangle$ being the maximal ideal of $\aop$.
Evidently, $\orw(\drk)=r$ and $\wiw(\drk)=k$. Every other such algebra $A$ of order $r$ can be expressed in a form
$$
A=\aop/\idj=\aop/\idi+\idm^{r+1},
$$
where the ideal $\idi$ satisfies $\idm^{r+1}\subsetneqq\idi\subseteq\idm^2$
and is generated by a finite number of polynomials, i.e. $\idi=\langle P_1,\dots,P_l \rangle$.
The fact $\idi\subseteq\idm^2$ implies that the width of $A$ is $k$ as well.
It is evident, that such expressions of algebras in question are not unique
after all. Clearly, $A$ can be expressed also as
$$
A=\drk/\idi,
$$
where $\idi$ is an ideal in $\drk$.
This last definition will be prefered in the paper; we will also frequently move from $\drk/\idi$ to $\aop/\idj$ and back.

Let $\au A$ be the group of automorphisms of the algebra $A$.
By a {\it fixed point} of $A$ we mean every $a\in A$
satisfying $\phi(a)=a$ for all $\phi\in\au A$. Let
$$
\sa=\{a\in A; \phi(a)=a \;\mathrm{ for\; all }\;\phi\in\au A \}
$$
be the set of all fixed points of $A$.
It is clear, that
$\sa$ is a subalgebra of $A$ containing constants (of couse, every automorphism sends 1 into 1),
i.e. $\sa\supseteq\er$. If $\sa=\er$, we say that $\sa$ is {\it trivial}.

\section
{Weil contact elements}
\label{sec:4}

Now, let the Weil algebra $A$ have width $\wiw(A)=k<m=\dim M$ and order $\orw(A)=r$.
Every $A$-velocity $V$ determines an underlying
$\mathbb D^1_k$-velocity $\underline V$. We say $V$ is {\it regular}, if $\underline V$ is regular, i.e.
having maximal rank $k$ (in its local coordinates).
Let us denote $\re T^AM$ the open subbbundle of $T^AM$ of regular velocities on $M$.
The {\it contact element of type $A$} or briefly the {\it Weil contact element}
on $M$ determined by $X\in\re T^AM$ is 
the equivalence
class 
$$
\au A_M(X)=\{\phi(X);\phi\in\au A\}.
$$
We denote by $K^AM$ the set of all contact elements of type $A$
on $M$. Then
$$
K^AM=\re T^AM/\au A
$$
has a differentiable manifold structure and $\re T^AM\to K^AM$
is a principal fiber bundle with the structure group $\au A$.
Moreover, $K^AM$~is a generalization of the bundle of higher order contact elements 
$K^r_kM=\re T^r_kM/G^r_k$ introduced by Claude Ehresmann.
We remark that the local description of regular velocities and contact elements is covered by the paper
\cite{KAL}.

Let us write
$$
\epsilon_A\colon \au A\to \mathrm{GL}(\idn_A/\idn_A^2)
$$
for the canonical group morphism. 
If we write as usual $\wiw(A)=\dim_\er(\idn_A/\idn_A^2)$, then
$\mathrm{GL}(\idn_A/\idn_A^2)$ reads as $\mathrm{GL}(\wiw(A),\er)$.

Further, the element $\phi\in \au A$ is called {\it orientation preserving}, if
the determinant of $\epsilon_A(\phi)$ is positive.

The subgroup of all orientation preserving elements of $\au A$ will be denoted by
$(\au A)^+$.

If we factorize
$$
\re T^AM / (\au A)^+,
$$
we obtain the bundle $K^{A+}M$ of {\it Weil oriented contact elements}.

As to orientability, we remark that even the case $\au A=(\au A)^+$ can occur. So, it is suggestive
to study the orientability (with interesting references to classical geometric problems) just from the indicated point of view.

\section
{Subalgebra of fixed points}
\label{sec:5}

We use the fact that a Weil algebra $A$ can also be considered as a factor algebra of the algebra $\aop$ of polynomials, i.e. $A=\aop/\idj$
and then $\idj$=$\idi+\idm^{r+1}$,
where $\idm=\langle x_1,\dots,x_k \rangle$ is the maximal ideal in $\aop$. 
Let $\tau\in\er$, $\tau\ne 0$, and let $H_{\tau}\colon\aop\to\aop$ be a (linear diagonal) homomorphism acting by 
\begin{eqnarray*}
x_1 &\mapsto& \tau x_1\\
&\dots&\\
x_k &\mapsto& \tau x_k. 
\end{eqnarray*}
Then it is necessary to determine whether $H_{\tau}$ induces a homomorphism $\bar H_{\tau}\colon A\to A$ or not.

\begin{definition}
The Weil algebra $A=\drk/\idi$ is called {\it monomial}, if $\idi$ is monomial.
\end{definition}

\begin{proposition}
If $A$ is a monomial Weil algebra, then its subalgebra $\sa$ of fixed points is trivial.
\end{proposition}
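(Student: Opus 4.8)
The plan is to exploit the homotheties $H_\tau$ introduced just before the statement. First I would record that, writing $A=\aop/\idj$ with $\idj=\idi+\idm^{r+1}$, the hypothesis that $A$ is monomial means $\idi$ is generated by monomials; since $\idm^{r+1}$ is manifestly monomial and a sum of monomial ideals is again monomial, $\idj$ is a monomial ideal. Consequently $H_\tau$ sends every generator of $\idj$ to a nonzero scalar multiple of itself, so $H_\tau(\idj)\subseteq\idj$; the same holds for $H_{1/\tau}=H_\tau^{-1}$, so $H_\tau$ descends to a well-defined algebra automorphism $\bar H_{\tau}\in\au A$ for every $\tau\in\er\setminus\{0\}$.

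Next I would use the grading that a monomial ideal forces on the quotient. The residue classes of the monomials not lying in $\idj$ form a vector-space basis of $A$, and $A=\bigoplus_{d=0}^{r}A_d$, where $A_d$ is spanned by the classes of the degree-$d$ monomials; this is a genuine algebra grading because $\idj$ is homogeneous, and $A_0=\er$. On $A_d$ the automorphism $\bar H_{\tau}$ acts simply as multiplication by $\tau^d$.

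Now take any $a\in\sa$ and decompose $a=\sum_{d=0}^{r}a_d$ with $a_d\in A_d$. Being a fixed point of \emph{every} automorphism, $a$ is in particular fixed by each $\bar H_{\tau}$, so $\sum_{d}\tau^d a_d=\sum_{d}a_d$ for all $\tau\ne 0$. Comparing homogeneous components gives $(\tau^d-1)a_d=0$ for every $d$ and every $\tau\ne 0$; taking, say, $\tau=2$ forces $a_d=0$ for all $d\ge 1$. Hence $a=a_0\in\er$, so $\sa\subseteq\er$, and together with the always-valid inclusion $\sa\supseteq\er$ this yields $\sa=\er$, i.e.\ $\sa$ is trivial.

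There is no serious obstacle here; the only point requiring a moment's care is the justification that $\idj$ is a monomial (hence homogeneous) ideal and that this really produces a direct-sum decomposition of $A$ into degree components on which $\bar H_{\tau}$ is scalar --- once that is in place the conclusion is a one-line linear-algebra observation. One could equivalently avoid the grading language and argue directly with a fixed monomial basis $\{x^\alpha\}$ of $A$: a fixed point $\sum_\alpha c_\alpha x^\alpha$ must satisfy $c_\alpha(\tau^{|\alpha|}-1)=0$ for all $\tau\ne 0$, whence $c_\alpha=0$ whenever $|\alpha|\ge 1$.
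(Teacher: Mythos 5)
Your proof is correct and follows essentially the same route as the paper's: observe that the monomial (hence homogeneous) ideal $\idj$ is preserved by the homothety $H_\tau$, so $\bar H_\tau\in\au A$, and then note that for $\tau\notin\{-1,0,1\}$ no nonzero element of $\idn_A$ is fixed by $\bar H_\tau$. You merely spell out two details the paper leaves implicit --- that $\bar H_\tau$ is invertible (via $H_{1/\tau}$) and the degree-by-degree comparison of homogeneous components --- which is a welcome but not substantively different elaboration.
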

\begin{proof}
\smartqed
It is clear that $\idj$ can be also generated by monomials. The homomorphism~$H_{\tau}$ sends every such monomial from $\idj$ again into $\idj$, i.e.
$H_{\tau}(\idj)\subseteq\idj$ and we have the induced homomorphism $\bar H_{\tau}\colon A\to A$. For $\tau\notin\{-1,0,1\}$,
$\bar H_{\tau}(a)\ne a$ for every element of $a\in\idn_A$. Thus, $\sa$ is trivial.
\qed
\end{proof}

\begin{definition}
The Weil algebra $A=\drk/\idi$ is called {\it homogeneous}, if $\idi$ is homogeneous.
\end{definition}

If we have a positive gradation $A=\bigoplus_{i\ge 0}A_i$ on a Weil algebra $A$ such that $\idn_A^n=\bigoplus_{i\ge n}A_i$ for each $n\ge 0$, we say
that $A$ is {\it gradable by the radical}, cf. \cite{SA1}.
We remark that $A$ is gradable by the radical if and only if $\idi$ is homogeneous.

\begin{proposition}
If $A$ is a homogeneous Weil algebra, then its subalgebra $\sa$ of fixed points is trivial.
\end{proposition}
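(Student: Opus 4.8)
The plan is to run the argument of the monomial case with ``monomial'' replaced by ``homogeneous,'' and to squeeze a little extra structure out of the induced map. First I would record that homogeneity of $A$ means that, written as $A=\aop/\idj$, the ideal $\idj=\idi+\idm^{r+1}$ can be taken homogeneous: indeed $\idm^{r+1}$ is generated by the monomials of degree $r+1$, which are homogeneous, so $\idj$ admits a system of homogeneous generators $Q_1,\dots,Q_s$ with $\deg Q_t=d_t$. Since $H_{\tau}$ multiplies every monomial of degree $d$ by $\tau^d$, it sends each $Q_t$ to $\tau^{d_t}Q_t$, whence $H_{\tau}(\idj)\subseteq\idj$ and $H_{\tau}$ descends to a homomorphism $\bar H_{\tau}\colon A\to A$ for every $\tau\ne 0$.

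Next I would observe that $\bar H_{\tau}$ is actually an automorphism: $H_{1/\tau}$ also preserves $\idj$, and $\bar H_{1/\tau}\circ\bar H_{\tau}=\overline{H_{1/\tau}\circ H_{\tau}}=\id$ (and symmetrically), so $\bar H_{\tau}\in\au A$. The point of homogeneity is that $\bar H_{\tau}$ is diagonal for the radical gradation: since $\idj$ is homogeneous, $A=\aop/\idj$ inherits the gradation $A=\bigoplus_{i\ge 0}A_i$ by degree --- this is precisely ``$A$ is gradable by the radical,'' with $A_0=\er$ --- and by construction $\bar H_{\tau}$ acts on the component $A_i$ as multiplication by $\tau^i$.

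Finally, take $a\in\sa$ and decompose $a=\sum_{i\ge 0}a_i$ with $a_i\in A_i$; pick any $\tau\in\er\setminus\{-1,0,1\}$. Applying the fixed-point condition to the automorphism $\bar H_{\tau}$ and comparing homogeneous components gives $\tau^i a_i=a_i$, i.e. $(\tau^i-1)a_i=0$, for every $i$; as $\tau^i\ne 1$ for $i\ge 1$ this forces $a_i=0$ for all $i\ge 1$. Hence $a=a_0\in A_0=\er$, so $\sa=\er$ is trivial.

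I expect the only real care to be needed in the first step --- checking that a homogeneous ideal is genuinely stable under $H_{\tau}$ and that adjoining $\idm^{r+1}$ does not spoil homogeneity --- while everything afterwards is formal once the radical gradation of $A$ and the diagonal action of $\bar H_{\tau}$ on it are in place.
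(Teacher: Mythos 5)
Your proposal is correct and follows essentially the same route as the paper: both arguments hinge on the dilation $H_{\tau}$ preserving the homogeneous ideal $\idj$ and hence descending to $\bar H_{\tau}\in\au A$, which moves every nonzero nilpotent element for $\tau\notin\{-1,0,1\}$. Your version merely makes explicit two points the paper leaves implicit --- that $\bar H_{\tau}$ is invertible (via $H_{1/\tau}$) and that the graded decomposition $(\tau^i-1)a_i=0$ is what forces a fixed element into $A_0=\er$ --- which is a welcome sharpening rather than a different argument.
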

\begin{proof}
\smartqed
The reason is completely identical to that in the previous proposition (see \cite{KM1} for the original proof):
the homomorphism~$H_{\tau}$ sends a homogeneous polynomial from $\idj$ again into $\idj$, i.e.
$H_{\tau}(\idj)\subseteq\idj$ and we have the induced homomorphism $\bar H_{\tau}\colon A\to A$. For $\tau\notin\{-1,0,1\}$,
$\bar H_{\tau}(a)\ne a$ for every element of $a\in\idn_A$ and $\sa$ is trivial.
\qed
\end{proof}

The idea of the proofs of the two propositions above lies in the fact that $H_{\tau}$ maps $\idj$ into $\idj$.
Thus, it is not difficult to derive the following slight generalization.
Let $\tau_1,\dots,\tau_k$ be non-zero real numbers and $H_{\tau_1,\dots,\tau_k}\colon\aop\to\aop$ be a (linear diagonal) homomorphism acting by 
\begin{eqnarray*}
x_1 &\mapsto& \tau_1 x_1\\
&\dots&\\
x_k &\mapsto& \tau_k x_k. 
\end{eqnarray*}

\begin{proposition}
If $A=\aop/\idj$ is a Weil algebra with $\wiw(A)=k$ and if there exist some $\tau_1,\dots,\tau_k\in\mathbb R-[-1,1]$
(or $\tau_1,\dots,\tau_k\in (-1,1)-\{0\}$) such that $H_{\tau_1,\dots,\tau_k}(\idj)\subseteq\idj$,
then the subalgebra $\sa$ of fixed points of $A$ is trivial.
\end{proposition}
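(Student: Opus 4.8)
The plan is to exhibit a single automorphism of $A$ whose only fixed points are the scalars; since $\sa$ is contained in the fixed set of every automorphism, this immediately gives $\sa=\er$. The argument is the same in spirit as in the two preceding propositions, but since $\idj$ is now assumed neither monomial nor homogeneous, the monomial-by-monomial bookkeeping used there is replaced by a semisimplicity argument.

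First I would verify that $H:=H_{\tau_1,\dots,\tau_k}$ induces an \emph{automorphism} $\bar H$ of $A$, not merely an endomorphism. Since every $\tau_i\ne 0$, $H$ is an automorphism of $\aop$, with inverse $H_{1/\tau_1,\dots,1/\tau_k}$, and it therefore induces an isomorphism $\aop/\idj\to\aop/H(\idj)$; in particular $\aop/H(\idj)$ has the same finite dimension as $A=\aop/\idj$. Combined with the hypothesis $H(\idj)\subseteq\idj$ this forces $H(\idj)=\idj$, so $H^{-1}$ preserves $\idj$ as well and $\bar H$ is a genuine automorphism of $A$.

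Now write $\tau^{\alpha}:=\tau_1^{\alpha_1}\cdots\tau_k^{\alpha_k}$ for a multi-index $\alpha\in\en^{k}$. The classes $\bar x^{\alpha}$ of the monomials span $A$, and each of them is an eigenvector of $\bar H$ with eigenvalue $\tau^{\alpha}$; grouping them by eigenvalue and invoking the linear independence of eigenvectors attached to distinct eigenvalues yields a direct sum decomposition $A=\bigoplus_{\mu}A_{\mu}$ on which $\bar H$ acts as the scalar $\mu$, where $A_{\mu}=\mathrm{span}_\er\{\bar x^{\alpha};\ \tau^{\alpha}=\mu\}$. Here is the one place the hypothesis enters essentially: if all $|\tau_i|>1$ then $|\tau^{\alpha}|>1$ for every $\alpha\ne 0$, while if all $|\tau_i|<1$ then $|\tau^{\alpha}|<1$ for every $\alpha\ne 0$; either way $\tau^{\alpha}=1$ only for $\alpha=0$, so $A_{1}=\er\cdot 1_A=\er$.

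Finally, every $a\in\sa$ is fixed by $\bar H\in\au A$, hence lies in the eigenvalue-$1$ subspace $A_{1}=\er$; thus $\sa\subseteq\er$, and since the reverse inclusion always holds, $\sa$ is trivial. The only points requiring care are the two just highlighted — that $\bar H$ is an automorphism and that $\tau^{\alpha}=1$ forces $\alpha=0$ under the stated constraints on the $\tau_i$ — the rest being routine linear algebra. (Taking $\tau_1=\dots=\tau_k$ recovers the monomial and homogeneous cases, since then $H_\tau$ visibly maps a monomial, respectively homogeneous, ideal into itself.)
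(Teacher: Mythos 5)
Your proof is correct and follows the same route as the paper: the diagonal substitution $H_{\tau_1,\dots,\tau_k}$ induces an automorphism of $A$ whose only fixed points are the scalars, because each monomial class is an eigenvector with eigenvalue $\tau^\alpha$, and the sign conditions on the $\tau_i$ rule out $\tau^\alpha=1$ for $\alpha\ne 0$. You in fact supply two details that the paper's one-line sketch glosses over --- the dimension argument showing $H(\idj)=\idj$, so that $\bar H$ is a genuine automorphism rather than merely an endomorphism, and the eigenspace decomposition that rigorously upgrades ``no nonconstant monomial is fixed'' to ``no nonconstant element of $A$ is fixed.''
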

\begin{proof}
\smartqed
The idea of the proof of this generalization is clear: if $H_{\tau_1,\dots,\tau_k}(\idj)\subseteq\idj$, then every non-constant monomial
from $\idj$ maps onto a monomial in $\idj$ (with the same multidegree), however, not onto the same monomial, because of the impossibility
to obtain 1 as a product of $\tau$'s. The induced homomorphism preserves this property.
\qed
\end{proof}

The assertions of the previous three propositions do not hold in the opposite direction --- not even the last one, which has the most general presumptions.
For example $A=\mathbb D^4_2/\langle x^2+y^3, x^3+y^4\rangle$ has trivial $\sa$, but there are no
$\tau_1,\tau_2\in\er-[-1,1]$ (or $\tau_1,\tau_2\in (-1,1)-\{0\}$)
such that $H_{\tau_1,\tau_2}(\langle x^2+y^3, x^3+y^4\rangle+\idm^5)\subseteq
\langle x^2+y^3, x^3+y^4\rangle+\idm^5$, see \cite{KM1}.

It is now the right time to show that there exist Weil algebras for which their subalgebras of fixed points are not trivial.
Examples of such algebras are
$\mathbb D^4_2/\langle x^2y+y^4,x^3+xy^2 \rangle$
or $\mathbb D^3_3 / \langle x^2+y^3, xy+z^3, y^2z+yz^2 \rangle$. This can be verified by a direct computation (although it is not evident at first sight: see Appendix!).
Moreover, the following "order theorem" holds.

\begin{proposition}
There is no algebra $A$ with $\wiw(A)=1$ and with nontrivial fixed point subalgebra.
There exist algebras $A$ with $\wiw(A)=2$ with a nontrivial fixed point subalgebra
if and only if $\orw(A)\ge 4$. For all 
$k>2$, there exist an algebra with $\wiw(A)=k$ and with a nontrivial fixed point subalgebra
if and only if $\orw(A)\ge 3$.
\end{proposition}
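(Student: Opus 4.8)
The plan is to split the three assertions into a negative half --- showing that no Weil algebra below the indicated order has a nontrivial $\sa$ --- and a positive half --- exhibiting algebras that realize the indicated orders. Throughout, subtracting its constant term turns a fixed point into one lying in $\idn_A$, so it suffices to decide whether $\sa\cap\idn_A=0$. Two ranges of the negative half follow immediately from the propositions already proved. If $\wiw(A)=1$, then $\idn_A/\idn_A^2$ is one-dimensional, so by Nakayama $\idn_A$ is principal and $A\cong\mathbb D^r_1=\er[t]/(t^{r+1})$ with $r=\orw(A)$; this algebra is monomial, hence $\sa$ is trivial. If $\orw(A)\le2$ (for any width), we may take $A=\mathbb D^2_k/\idi$ with $\idi\subseteq\idm^2$, and since $\idm^3=0$ in $\mathbb D^2_k$ the ideal $\idi$ is a subspace of the degree-two component, hence homogeneous; so $\sa$ is again trivial. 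This settles $\wiw(A)=1$ completely, the ``only if'' direction for $\wiw(A)=k\ge3$, and the range $\orw(A)\le2$ of the ``only if'' direction for $\wiw(A)=2$.

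The one remaining part of the negative half --- that $\wiw(A)=2$ and $\orw(A)=3$ together force $\sa=\er$ --- is the step I expect to be the main obstacle, since it is not visible from any grading or scaling. Write $A=\mathbb D^3_2/\idi$ with $\idi\subseteq\idm^2$, and let $d$ be the dimension of the space of degree-two leading terms of the elements of $\idi$. One checks that $d=3$ forces $\idm^3\subseteq\idi$, hence $\orw(A)\le2$, so only $d\in\{0,1,2\}$ can occur; and $d=0$ means $\idi\subseteq\idm^3$ is a subspace of the degree-three component, hence homogeneous, so $\sa$ is trivial there too. For $d\in\{1,2\}$ one normalizes the degree-two part of $\idi$ by a linear substitution in $x,y$ --- only finitely many cases arise, since there are finitely many $\mathrm{GL}(2,\er)$-orbits of one- and two-dimensional subspaces of binary quadratic forms --- and then, in each normal form, produces enough automorphisms of $A$ to move every nonconstant element. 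The subtle point is that for many of these ideals no scaling $H_{\tau_1,\tau_2}$ of the kind used in the previous proposition maps $\idj$ into itself (for example $\idi=\langle xy+x^3+y^3\rangle$ admits no admissible $\tau_1,\tau_2$), so one is forced to use automorphisms of $A$ that do \emph{not} lift to $\mathbb D^3_2$ --- typically of the form $x\mapsto\tau x+(\tau-1)y^2$, $y\mapsto y+(1-\tau^2)x^2$, which is a well-defined homomorphism of $A$ precisely because $x^2y=xy^2=0$ holds there. Together with the coordinate swap and the unipotent shears $x\mapsto x+q$, $q$ a cubic, such maps are shown to suffice in each normal form; carrying this through case by case is the bulk of the argument and is done in \cite{KM1}.

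For the positive half it is enough to display one algebra for each admissible pair $(\wiw,\orw)$. When $\wiw(A)=2$ and $\orw(A)=r\ge4$, one takes the family generalizing $\mathbb D^4_2/\langle x^2y+y^4,\ x^3+xy^2\rangle$ and checks by a direct computation --- as in the Appendix --- that $\sa$ is nontrivial, with the fixed point lying in $\idn_A^2$. When $\wiw(A)=k\ge3$ and $\orw(A)=3$, one starts from $\mathbb D^3_3/\langle x^2+y^3,\ xy+z^3,\ y^2z+yz^2\rangle$ and adjoins $k-3$ ``decoupled'' generators $t_1,\dots,t_{k-3}$ subject to $t_it_j=0$ and $t_ix=t_iy=t_iz=0$: the width rises to $k$, the order stays $3$, and the former fixed point $a_0\in\idn^2$ survives, because each $t_j$ is annihilated by the whole nilradical, hence so is its image under any automorphism $\psi$, forcing the linear part of $\psi(t_j)$ into the span of the $t_i$; consequently $\psi$ restricts to an automorphism of the original algebra and fixes $a_0$. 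Applying the same decoupling construction to a width-two, order-$r$ example disposes of the last range $\wiw(A)=k\ge3$, $\orw(A)=r\ge4$. As the text emphasizes, the nontriviality of $\sa$ for all these examples is genuinely a matter of computation and is not apparent from the presentations.
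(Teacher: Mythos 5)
The paper does not actually supply a proof here: it states that the argument rests on several technical lemmas and refers the reader to \cite{KUS} and \cite{KM2}. So there is no argument in the text to compare yours against, and your proposal has to be judged on its own. Its skeleton is reasonable and its easy parts are right: the reduction of $\wiw(A)=1$ to the monomial case via Nakayama, the reduction of $\orw(A)\le 2$ to the homogeneous case, and the observation that the whole difficulty concentrates in (i) the nonexistence statement for $\wiw(A)=2$, $\orw(A)=3$ and (ii) the production of examples. But exactly at those points your write-up either defers to the literature (the $\mathrm{GL}(2,\er)$-normal-form case analysis is only outlined, which is no worse than what the paper does) or contains concrete errors, two of which I want to flag.

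First, in the decoupling step you claim that since each adjoined generator $t_j$ lies in the socle, any automorphism $\psi$ of the enlarged algebra $B$ ``restricts to an automorphism of the original algebra'' $A$. That is false as stated: for the example at hand the map $x\mapsto x+t_1$, fixing $y,z,t_j$, extends to an automorphism of $B$ (all the defining relations survive because $t_1\idn_B=0$), and it does not map $A$ into $A$. The conclusion can be rescued, but by a different route: the span of the $t_j$ is an ideal $J$ with $B/J\cong A$, and one checks that the composite $A\hookrightarrow B\xrightarrow{\psi} B\to B/J\cong A$ is injective (using $\idn_B^2=\idn_A^2$ and the fact that the socle of $A$ sits inside $\idn_A^2$), hence an automorphism of $A$; since the fixed element $a_0$ lies in $\idn_A^2=\idn_B^2$, which meets $J$ trivially, $\psi(a_0)=a_0$ follows. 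As written, your argument would not survive scrutiny. Second, for $\wiw(A)=2$ the statement requires an example of \emph{every} order $r\ge 4$, and ``one takes the family generalizing $\mathbb D^4_2/\langle x^2y+y^4,\,x^3+xy^2\rangle$'' names no family: it is not clear what the order-$r$ member is, nor that its order is exactly $r$, nor that its fixed-point subalgebra stays nontrivial; this existence-for-all-$r$ is precisely the content of \cite{KUS} and cannot be waved through. These two points, together with the deferred $\wiw=2$, $\orw=3$ case analysis, mean the proposal is a correct plan rather than a proof.
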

\begin{proof}
\smartqed
The proof is based on several technical lemmas and we do not write it here
for its length. We refer mainly to \cite{KUS} and also to \cite{KM2}.
\qed
\end{proof}

Let us follow through a slightly different but also fairly good approach. For a Weil algebra $A$, the canonical
algebra homomorphism $\kappa_A\colon A\to \er$ can be viewed as the endomorphism $\kappa_A\colon A\to A$.
The group $\au A$ of $\er$-algebra automorphisms of $A$ is a real smooth manifold with the usual Euclidean topology. Then the following
definition is correct.

\begin{definition}
A Weil algebra $A$ is said to be
{\it dwindlable} if there is an infinite sequence
$\{\phi_n\}_{n=1}^\infty$ of automorphisms $\phi_n\in\au A$ such that
$\phi_n\to\kappa_A$ for $n\to\infty$.
\end{definition}

\begin{proposition}
If $A$ is a dwindlable Weil algebra, then its subalgebra $SA$ of fixed
elements is trivial. Apart from that, there are non-dwindlable Weil algebras
with trivial $SA$.
\end{proposition}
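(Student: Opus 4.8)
The plan is to handle the two assertions separately: the first is a short limit argument, and for the second I would exhibit one explicit Weil algebra and analyse its automorphism group.

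For the implication ``dwindlable $\Rightarrow$ $\sa$ trivial'', I would fix $a\in\sa$, so that $\phi(a)=a$ for every $\phi\in\au A$. Viewing $\au A$ inside the finite-dimensional real vector space of $\er$-linear endomorphisms of $A$ with its Euclidean topology, the evaluation map $\psi\mapsto\psi(a)$ is $\er$-linear, hence continuous; therefore $a=\phi_n(a)\to\kappa_A(a)$ as $n\to\infty$, so $a=\kappa_A(a)$. But $\kappa_A(a)\in\er\cdot 1_A$, so $a\in\er$; thus $\sa\subseteq\er$, and since $\sa\supseteq\er$ always, $\sa$ is trivial. This part is routine.

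For the second assertion it is enough to produce one non-dwindlable Weil algebra with trivial $\sa$, and the natural candidate is $A=\mathbb D^4_2/\langle x^2+y^3,\,x^3+y^4\rangle$, which was already observed above to have trivial $\sa$. I would prove it is not dwindlable by showing that $\au A$ is extremely rigid --- in fact $\epsilon_A(\au A)=\{\id\}$, i.e. every automorphism of $A$ has identity linear part. To see this, write $A=\aop/\idj$ with $\idj=\langle x^2+y^3,\,x^3+y^4\rangle+\idm^5$, lift an automorphism $\phi$ to $\aop$ so that $\phi(x)=\alpha x+\beta y$ and $\phi(y)=\gamma x+\delta y$ modulo terms of order $\ge 2$ (the linear part being invertible), and impose $\phi(\idj)\subseteq\idj$ degree by degree: the degree-$2$ part of $\phi(x^2+y^3)$ is $(\alpha x+\beta y)^2$, while the degree-$2$ part of every element of $\idj$ is a scalar multiple of $x^2$, which forces $\beta=0$; comparing next the degree-$3$ part of $\phi(x^2+y^3)-\alpha^2(x^2+y^3)$ and the degree-$4$ part of $\phi(x^3+y^4)-\alpha^3(x^3+y^4)$ with $\idj$ eliminates the monomials $y^3$, $xy^2$, respectively $y^4$, $xy^3$, and yields $\gamma=0$ together with $\delta^3=\alpha^2$ and $\delta^4=\alpha^3$, whence $\alpha=\delta=1$. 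Once this rigidity is established I would finish as follows: the map sending an algebra endomorphism of $A$ to its induced linear endomorphism of $\idn_A/\idn_A^2$ is continuous, agrees with $\epsilon_A$ on $\au A$, and sends $\kappa_A$ to $0$ (because $\kappa_A$ annihilates $\idn_A$); hence $\phi_n\to\kappa_A$ would force $\id=\lim_n\epsilon_A(\phi_n)=0$, which is absurd. So $A$ is not dwindlable, while $\sa$ is trivial, as required.

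The main obstacle is the bookkeeping in this automorphism computation. Because the defining ideal is not homogeneous --- and not even weighted-homogeneous, since the weights forced by $x^2+y^3$ and by $x^3+y^4$ are incompatible --- one cannot simply pass to the associated graded algebra, and the constraints on $(\alpha,\beta,\gamma,\delta)$ only emerge after carefully recording, in each relevant degree $d$, exactly which monomials can occur in the degree-$d$ component of a degree-$\ge d$ element of $\idj$. I would also recheck, by the direct computation indicated in the Appendix, that $\sa$ really is trivial for this $A$, so that the example genuinely does the job; and I would note that the homogeneous (gradable by the radical) algebras, which also have trivial $\sa$, are by contrast all dwindlable via the homotheties $\bar H_\tau$ with $\tau\to 0$, so that they could not have served as the counterexample.
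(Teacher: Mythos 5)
Your proof of the first assertion is correct and coincides with the paper's argument (a limit of the constant sequence $\phi_n(a)=a$ must equal $\kappa_A(a)$). The gap is in the second assertion: your witness $A=\mathbb D^4_2/\langle x^2+y^3,x^3+y^4\rangle$ does not work, because this algebra is in fact dwindlable. The error is in the rigidity computation. In degree $3$, the vanishing of the $xy^2$-coefficient of $\phi(x^2+y^3)-\alpha^2(x^2+y^3)$ modulo $\idj$ reads $2\alpha E+3\gamma\delta^2=0$, where $E$ is the coefficient of $y^2$ in the quadratic part of $\phi(x)$ (you dropped the cross term $2\,(\alpha x)(Ey^2)$); this equation determines $E$ and puts no constraint on $\gamma$. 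Likewise $xy^3$ is \emph{not} excluded from the degree-$4$ part of elements of $\idj$ with the already-forced lower-order parts (it occurs in $x(x^2+y^3)=x^3+xy^3$), so the second generator yields only $4\gamma\delta^3+\delta^4=\alpha^3$. Computing directly in $A$, where $x^2=-y^3$, $x^3=-y^4$, $xy^3=y^4$ and $x^4=x^3y=x^2y^2=0$, the admissible linear parts are exactly $\beta=0$, $\delta^3=\alpha^2$, $\alpha=4\gamma+\delta$. Taking $\alpha=t^3$, $\delta=t^2$, $\gamma=\tfrac14(t^3-t^2)$ and completing the higher-order coefficients (the remaining equations merely determine some of them, with values tending to $0$ with $t$) produces automorphisms $\phi_t\to\kappa_A$ as $t\to 0$. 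So $A$ is dwindlable; by your own first paragraph this explains why its $\sa$ is trivial, but it cannot serve as the required counterexample.

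There is also a structural reason why the claim $\epsilon_A(\au A)=\{\id\}$ had to fail for an algebra with trivial $\sa$: an automorphism acting as the identity on $\idn_A/\idn_A^2$ acts as the identity on each quotient $\idn_A^i/\idn_A^{i+1}$, hence is unipotent, and by Kolchin's theorem a group of unipotent operators on $\idn_A\ne 0$ has a nonzero common fixed vector; your rigidity claim would therefore force $\sa\supsetneqq\er$, contradicting the very property you need of the example. Your closing continuity argument (if $\phi_n\to\kappa_A$ then $\epsilon_A(\phi_n)\to 0$, which is impossible when $\epsilon_A(\au A)$ stays away from $0$) is the right tool for proving non-dwindlability, but it must be applied to an algebra that genuinely is rigid in this weaker sense; the paper's stated witness is the different algebra $\mathbb D^5_2/\langle xy^2+x^5,x^2y+y^5\rangle$, for which the degree-$3$ parts of the generators already force the linear part to be diagonal or antidiagonal, and that is the example you should analyse.
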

\begin{proof}
\smartqed
If $A$ is dwindlable and $\sa$ is not trivial, then there exists an element $0\ne a\in\idn_A$ belonging to $\sa$.
As there is also an infinite sequence $\{\phi_n\}_{n=1}^\infty$,
$\phi_n\in\au A$, $\phi_n\to\kappa_A$ for $n\to\infty$, we deduce for $a$ that
$0\ne a=\phi_n(a)\to\kappa_A(a)=0$ which is a contradiction.
\qed
\end{proof}

On the other hand, $\mathbb D^5_2/\langle xy^2+x^5,x^2y+y^5\rangle$ represents an example of a non-dwindlable Weil algebra with trivial $\sa$.
Furthermore, let us remark that for a dwindlable Weil algebra $A$ the group $U_A$ of unipotent automorphisms
(i.e. such automorphisms $\phi$ for which $\id_A-\phi$ is a nilpotent
endomorphism of $A$) is a proper subgroup of the connected
identity component $G_A$ of $\au A$, see \cite{KAN}. The index of the subgroup $G_A$ also represents an important object of interest, cf.
\cite{KAS}.

Let us return to the geometric motivation. 
>From what we have stated, we have deduced in \cite{KM1} and \cite{KM2}
the following results: \newline
{\it There is a one-to-one correspondence between all natural operators lifting vector fields
from $m$-manifolds to the bundle functor $K^A$ of Weil contact
elements and the subalgebra of fixed elements $SA$ of $A$.}\newline
{\it There is a one-to-one correspondence between all natural affinors
on $K^A$ and the subalgebra of fixed elements $SA$ of $A$.}\newline
{\it All natural operators lifting 1-forms
from $m$-dimensional manifolds to the bundle functor $K^A$ of
Weil contact elements are classified for the case of dwindlable Weil algebras: they represent constant multiples of the vertical lifting.}

\subsection*{To the open problem}
We conclude that the main problem of an exact one-to-one characterization of Weil algebras having non-trivial fixed point subalgebras remains open.

Nevertheless, a number of partial (sub-)problems can be mentioned. For example, elements $a\in A$ annihilated
by any element of the nilradical $\idn_A$, i.e. having the property $au=0$ for all $u\in\idn_A$, 
constitute an ideal which is called the
{\it socle} of $A$ and denoted by $\mathrm{soc}(A)$.
Then elements of $A$ in the form
$r_1+r_2a$, $r_1,r_2\in\er$, $a\in\mathrm{soc}(A)$
form a subalgebra $\ma$ of $A$.
The problem of a relation between $\sa$ and $\ma$ is also open (with the conjecture: $\sa\subseteq \ma$).

\section*
{Appendix: The computation method and two examples}
\label{sec:6}

We present a computation method  for the description of automorphisms and  detecting whether the fixed point subalgebra is trivial or not.

\begin{example}
The first example is of theoretical importance, see Proposition~4.
Let 
$$
A=\mathbb D^4_2/\langle x^2y+y^4,x^3+xy^2 \rangle.
$$
The elements of $A$ have the form
$$
k_1+k_2 x+k_3 y+k_4 x^2+k_5 xy +k_6 y^2+k_7 x^3
+k_8 x^2y+k_9 y^3
$$
with the simultaneous vanishing of all monomials of the fifth or higher
order in common with $x^4$, $x^3y$, $x^2y^2$, $xy^3$, $x^2y+y^4$ and
$x^3+xy^2$. We shall describe automorphisms of $A$. 
Automorphisms preserve the unit; so, we determine them by saying what is mapped to $x$ and $y$, for clarity, denoted rather by $\bar x$ and $\bar y$.
Thus, the starting point is a form
\begin{eqnarray*}
\bar x &=
&Ax+By+Cx^2+Dxy+Ey^2+Fx^3+Gx^2y+Hy^3 \\
\bar y &=
&Ix+Jy+Kx^2+Lxy+My^2+Nx^3+Ox^2y+Py^3.
\end{eqnarray*}
The matrix
$
\left(\begin{array}{cc}      
A & B \\       
I & J \end{array}\right)
$
must be non-singular and we consider the conditions
$\bar x^4=0$, $\bar x^3\bar y=0$, $\bar x^2\bar y^2=0$, $\bar x\bar y^3=0$,
$\bar x^2\bar y+\bar y^4=0$ and $\bar x^3+\bar x\bar y^2=0$ now.
The condition $\bar x^4=0$ gives $B=0$. The conditions $\bar x^3\bar y=0$,
$\bar x^2\bar y^2=0$, $\bar x\bar y^3=0$ give no new nontrivial relation.
The condition $\bar x^2\bar y+\bar y^4=0$ gives $I=0$, $A^2=J^3$.
The condition $\bar x^3+\bar x\bar y^2=0$ gives $E=0$, $A^2=J^2$.
So, we obtain $J=1$ and $A=-1$ or $A=1$. Hence the automorphisms have the
following form
\begin{eqnarray*}
\bar x &=
&\epsilon x+Cx^2+Dxy+Fx^3+Gx^2y+Hy^3 \\
\bar y &=
&y+Kx^2+Lxy+My^2+Nx^3+Ox^2y+Py^3,
\end{eqnarray*}
where $\epsilon\in\{-1,1\}$. (We observe that the group $\au A$ of automorphisms has two connected components.)
Finally, we solve the equation
\begin{eqnarray*}
&k_1+k_2 \bar x+k_3 \bar y+k_4 \bar x^2+k_5 \bar x\bar y +k_6 \bar y^2+
k_7 \bar x^3 +k_8 \bar x^2\bar y+k_9 \bar y^3
&=\\
&k_1+k_2 x+k_3 y+k_4 x^2+k_5 xy +k_6 y^2+k_7 x^3
+k_8 x^2y+k_9 y^3 &
\end{eqnarray*}
for $k_i$, $i=1,\dots,9$, by using the described automorphisms.
By comparing coefficients at powers of $x$ and $y$,
we find that $k_2=k_3=k_4=k_5=k_6=k_7=k_9=0$ and $k_1,k_8$ are arbitrary
real coefficients. This means
$$
SA=\{k_1+k_8 x^2y;\; k_1,k_8\in\er\}
$$
and $SA\supsetneqq \er$. 
\end{example}

\begin{example}
The second example is new.
Let 
$$
A=\mathbb D^4_3/\langle x^2+y^3+z^3 ,x^3+y^3+z^4, xyz \rangle.
$$
We start by expressing of elements of $A$ in the form
\begin{eqnarray*}
&& k_1+k_2 x +k_3 y +k_4 z +k_5 x^2 +k_6 xy +k_7 y^2 +k_8 x z +k_9 yz +k_{10} z^2+\\
&& k_{11} x^2 y +k_{12} x y^2 +k_{13} x^2 z +k_{14} y^2 z +k_{15} x z^2 +k_{16} y z^2 +k_{17} z^3 +k_{18} y^2 z^2
\end{eqnarray*}
with the simultaneous vanishing of all monomials of the fifth or higher
in common with
$xyz$, $x^2y^2$, $x^2z^2$, $x^2+y^3+z^3$, $x^2-x^3+x^2 z+z^3$, $x^2 z+z^4$, $x^2 y+y z^3$, $x^2+x^2 z+z^3+x z^3$.
The algorithm given above yields after a "bit of calculation" a connected group of automorphisms (we leave its exact expression as an exercise to the reader)
and 
$$
SA=\{k_1+k_5 x^2+k_{12}x y^2+k_{13}x^2 z+k_{18}y^2 z^2;\; k_1,k_5,k_{12},k_{13},k_{18}\in\er\}.
$$
Hence, we find that the dimension of the subalgebra $SA$ of fixed points is remarkably high.
\end{example}

\section*
{Acknowledgement}
\label{sec:7}

The author thanks an unknown referee for comments that improved the paper.

\bibliographystyle{amsplain}
\makeatletter \renewcommand{\@biblabel}[1]{\hfill#1.}\makeatother

\end{document}